\DeclareFontShape{OT1}{cmtt}{bx}{n}{<5><6><7><8><9><10><10.95><12><14.4><17.28><20.74><24.88>cmttb10}{}
\newtheorem{thm}{Theorem}
\newtheorem{lem}[thm]{Lemma}
\def\ph{\varphi}
\let\on=\operatorname
\def\R{{\mathbb R}}
\def\Imm{{\on{Imm}}}
\def\Diff{{\on{Diff}}}
\def\Hor{{\on{Hor}}}
\def\Nor{{\on{Nor}}}
\def\Ver{{\on{Ver}}}
\def\Tan{{\on{Tan}}}
\begin{document}
\title{Metrics with prescribed horizontal bundle on  spaces of curves}
\author{Martin Bauer}
\author{Philipp Harms}
\address{
Martin Bauer
Fakult\"at f\"ur Mathematik und Geoinformation, TU Wien
\newline\indent
Philipp Harms: Department of Mathematics, ETH Z\"urich}
\email{bauer.martin@univie.ac.at}
\email{philipp.harms@math.ethz.ch}
\thanks{M. Bauer was supported by the European Research Council (ERC), within the project 306445 (Isoperimetric Inequalities and Integral Geometry) and by the FWF-project P24625 (Geometry of Shape spaces).
}
\date{\today}

\keywords{curve matching; shape space; Sobolev type metric; reparametrization group; Riemannian shape analysis; Riemannian submersion; infinite dimensional geometry}
\subjclass[2010]{Primary 58D17, 58E30, 35A01}

\begin{abstract}
We study metrics on the shape space of curves that induce a prescribed splitting of the tangent bundle. More specifically, we consider reparametrization invariant metrics $G$ on the space $\Imm(S^1,\R^2)$ of parametrized regular curves. For many metrics the tangent space $T_c\Imm(S^1,\R^2)$ at each curve $c$ splits into vertical and horizontal components (with respect to the projection onto the shape space $B_i(S^1,\R^2)=\Imm(S^1,\R^2)/\Diff(S^1)$ of unparametrized curves and with respect to the metric $G$). In a previous article we characterized all metrics $G$ such that the induced splitting coincides with the natural splitting into normal and tangential parts. In these notes we extend this analysis to characterize all metrics that induce any prescribed splitting of the tangent bundle.
\end{abstract}

\maketitle

\section{Introduction}
Let $\Imm(S^1,\mathbb R^2)$ be the space of regular planar curves. Our center of attention lies on the shape space of unparametrized curves, which can be identified with the quotient space
\begin{equation*}
B_i(S^1,\mathbb R^2)=\Imm(S^1,\mathbb R^2)/\Diff(S^1)\,.
\end{equation*}
Here, $\Diff(S^1)$ denotes the Lie group of all smooth diffeomorphisms on the circle, which acts smoothly on $\Imm(S^1,\mathbb R^2)$ via composition from the right:
\begin{equation*}
\Imm(S^1,\mathbb R^2)\times \Diff(S^1)\to \Imm(S^1,\mathbb R^2),\qquad (c,\ph)\mapsto c\circ\ph\,.
\end{equation*}
The quotient space $B_i(S^1,\mathbb R^2)$ is not a manifold, but only an orbifold with isolated singularities (see \cite{Michor1991} for more information). A strong motivation for considering this space -- and in particular Riemannian metrics thereon -- comes from the field of shape analysis \cite{Mio2004,Mio2007,SKKS2014,Sundaramoorthi2011,Michor2008a,Bauer2014b}. See \cite{Bauer2014} for a recent overview on various metrics on these spaces.

Given a reparametrization invariant metric $G$ on $\Imm(S^1,\mathbb R^2)$, we can (under certain conditions) induce a unique Riemannian metric on the quotient space $B_i(S^1,\mathbb R^2)$ such that the projection
\begin{equation}\label{equ:projection}
    \pi: \Imm(S^1,\mathbb R^2) \to B_i(S^1,\mathbb R^2):= \Imm(S^1,\mathbb R^2)/\Diff(S^1)
\end{equation}
is a Riemannian submersion. A detailed description of this construction is given in \cite[Section~4]{Bauer2011b}. For many metrics, $T\pi$ induces a splitting of the tangent bundle $T\Imm(S^1,\mathbb R^2)$ into a vertical bundle, which is defined as the kernel of $T\pi$, and a horizontal bundle, defined as the $G$-orthogonal complement of the vertical bundle:
\begin{equation}\label{equ:splitting1}
    T\Imm(S^1,\mathbb R^2) = \on{ker}T\pi \oplus (\on{ker}T\pi)^{\bot,G} 
    =: \Ver \oplus \Hor\, .
\end{equation}
If one can lift any curve in $B_i(S^1,\mathbb R^2)$ to a horizontal curve in $\Imm(S^1,\mathbb R^2)$, then there is a one-to-one correspondence between geodesics on shape space $B_i(S^1,\mathbb R^2)$ and horizontal geodesics on $\Imm(S^1,\mathbb R^2)$ \cite[Section~4.8]{Bauer2011b}. 

In \cite{BH2015} we described all metrics on $\Imm(S^1,\mathbb R^2)$ such that the splitting \eqref{equ:splitting1} coincides with the natural splitting into components that are tangential and normal to the immersed surface. In this article we characterize all metrics that induce an arbitrary given splitting of $T\Imm(S^1,\R^2)$, generalizing our previous result. 

As an application of our result we investigate a splitting that could be used to develop efficient numerics for the horizontal geodesic equation. The splitting is the decomposition of $T_c\Imm(S^1,\mathbb R^2)$ into deformations preserving the speed $\|\dot c\|$ and a suitable complement.

\section{The decomposition theorem}

\subsection{Assumptions}\label{subsec:assumptions}

Following \cite{Bauer2011b}, we now describe the class of metrics that we study in this article. We define all metrics via a so-called inertia operator $L$ by the formula
\begin{equation}\label{equ:metric}
G_c^L(h,k)  = \int_{S^1}\langle L_c h,k \rangle ds, 
\end{equation}
where $ds=|c'|d\theta$ denotes integration by arc-length.
We assume that $L$ is a smooth bundle automorphism of $T\Imm(S^1,\mathbb R^2)$ such that at every $c \in \Imm(S^1,\mathbb R^2)$, the operator 
\begin{equation*}
L_c:T_c\Imm(S^1,\mathbb R^2) \to T_c\Imm(S^1,\mathbb R^2)
\end{equation*}
is a pseudo-differential operator of order $2l$ which is symmetric and positive with respect to the $L^2$-metric on $\Imm(S^1,\mathbb R^2)$. Moreover, we assume that $L$ is invariant under the action of the reparametrization group $\Diff(S^1)$ acting on $\Imm(S^1,\mathbb R^2)$, i.e.,
\begin{equation*}
L_{c\circ \ph}(h\circ \ph)=L_c(h)\circ \ph \qquad \text{for all } \ph \in \Diff(S^1)\,.
\end{equation*}
These assumptions remain in place throughout this work. Their immediate use is as follows: being symmetric and positive, $L$ induces a Sobolev-type metric on the manifold of immersions through equation \eqref{equ:metric}. The $\Diff(S^1)$-invariance of $L$ implies the $\Diff(S^1)$-invariance of the metric $G^L$. Assuming that the decomposition in
horizontal and vertical bundles exists, there is a unique metric on $B_i(S^1,\mathbb R^2)$ such that the projection \eqref{equ:projection} is a Riemannian submersion (see \cite[Thm. 4.7]{Bauer2011b}). Then  the resulting geometry of shape space is mirrored by the ``horizontal geometry'' on the manifold of immersions. 

\subsection{Splitting into horizontal and vertical subbundles}
By  definition, the horizontal and vertical bundles are given by
\begin{equation*}\begin{split}
\Ver_c&:=\ker(T\pi)=\Tan_c\,,
\\ 
\Hor_c&:=(\Ver_c)^{\bot,G}=\big\{h\in T_c\Imm(S^1,\mathbb R^2): L_ch \in \Nor_c\}\,.
\end{split}\end{equation*}
Note, that $\Hor \oplus \Ver$ might not span all of $T\Imm$ in this infinite-dimensional setting. 

\subsection{Constructing metrics that induce a prescribed splitting.}
 
We now state our main result. 

\begin{thm}[Decomposition theorem]\label{thm:decomposition}
Let $c \in \Imm(S^1,\mathbb R^2)$ and let $H(c)$ be any complement of $\operatorname{Tan}(c)$, i.e.,
\begin{equation}\label{TImm_splitting_tangent}
T_c\Imm(S^1,\mathbb R^2)=\operatorname{Tan}(c) \oplus H(c)\;.
\end{equation}
Then the following conditions on a metric $G^L$ are equivalent:
\begin{itemize}
\item[(a)] The tangent bundle splits into a vertical and horizontal bundle \eqref{equ:splitting1} and this splitting coincides with  \eqref{TImm_splitting_tangent}.
 \item[(b)] 
The inertia operator $L_c$ admits a decomposition  
\begin{equation*}
 L_c=  (P^{\operatorname{tan}})^*\circ \tilde L_c \circ P^{\operatorname{tan}}+ (P^{H})^*\circ \tilde L_c \circ P^{H},
\end{equation*}
where $\tilde L_c: T_c\Imm(S^1,\R^2)\to T_c\Imm(S^1,\R^2)$ is an invertible pseudo-differential operator and where ${}^*$ denotes the adjoint with respect to the reparametrization-invariant $L^2$-metric.
\end{itemize}
\end{thm}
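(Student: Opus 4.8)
The plan is to translate condition (a) into a concrete algebraic condition on $L_c$ and then read the decomposition in (b) directly off it. The starting observation is that, independently of (a), the horizontal bundle attached to $G^L$ is always
\[
\Hor_c=(\Tan_c)^{\perp,G}=\{\,h:\langle L_c h,v\rangle_{L^2}=0\ \text{for all}\ v\in\Tan_c\,\}=\{\,h:L_c h\in\Nor_c\,\},
\]
using only that $\Ver_c=\Tan_c$ and $\Nor_c=(\Tan_c)^{\perp,L^2}$. Hence (a), i.e.\ $\Hor_c=H(c)$, is equivalent to the inclusion $L_c(H(c))\subseteq\Nor_c$: one direction is immediate, and for the reverse I would take $h\in\Hor_c$, split it as $h=v+h'$ along $\Tan\oplus H$, and note $L_c v=L_c h-L_c h'\in\Nor_c$, which forces $v\in\Tan_c\cap(\Tan_c)^{\perp,G}=\{0\}$ by positivity of $G^L$. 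I would also record the adjoint form: since $L_c$ is $L^2$-symmetric, $L_c(H)\subseteq\Nor$ is equivalent to $L_c(\Tan)\subseteq H^{\perp,L^2}$.

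For (b)$\Rightarrow$(a) I would simply evaluate the decomposition on $H(c)$. If $h\in H=\operatorname{range}(P^H)=\ker(P^{\tan})$, then $P^{\tan}h=0$ and $P^H h=h$, so $L_c h=(P^H)^*\tilde L_c h$. The defining property of the adjoint gives $\langle (P^H)^*u,v\rangle=\langle u,P^H v\rangle=0$ for every $v\in\Tan=\ker P^H$, whence $\operatorname{range}((P^H)^*)\subseteq\Nor$; therefore $L_c h\in\Nor_c$ for all $h\in H$, which is exactly the reformulation of (a). Note that invertibility of $\tilde L_c$ is not needed for this direction.

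For (a)$\Rightarrow$(b) I would take $\tilde L_c=L_c$, which is invertible and pseudodifferential by the standing assumptions, and verify the identity $L_c=(P^{\tan})^*L_c P^{\tan}+(P^H)^*L_c P^H$ by testing on the summands of $T_c\Imm=\Tan\oplus H$ from \eqref{TImm_splitting_tangent}. On $v\in\Tan$ the right-hand side collapses to $(P^{\tan})^*L_c v$; from $P^{\tan}+P^H=\operatorname{Id}$ one checks that $(P^{\tan})^*w=w$ precisely when $w\perp\operatorname{range}(P^H)=H$, and $L_c v\in H^{\perp,L^2}$ by the adjoint form of (a), so the right-hand side equals $L_c v$. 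On $h\in H$ the right-hand side collapses to $(P^H)^*L_c h$, and likewise $(P^H)^*w=w$ precisely when $w\perp\operatorname{range}(P^{\tan})=\Tan$, i.e.\ when $w\in\Nor$; since $L_c h\in\Nor_c$ by (a), the right-hand side equals $L_c h$. As both sides agree on $\Tan$ and on $H$, they agree everywhere.

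The genuinely delicate point is functional-analytic rather than algebraic: everything above lives on the Fréchet manifold of smooth immersions equipped with the weak $L^2$-metric, so I must ensure that the projections $P^{\tan},P^H$ of the prescribed splitting and their $L^2$-adjoints are well-defined pseudodifferential operators and that the relations $\ker(P^H)=\Tan$, $\operatorname{range}(P^H)=H$ pass to the adjoints in the form used above. I deliberately phrased the two implications so as to invoke only the defining relation of the adjoint and the idempotent identity $P^{\tan}+P^H=\operatorname{Id}$, avoiding any appeal to closedness of ranges or to an orthogonal-complement decomposition that need not exist here; the sole additional analytic input is the positivity of $G^L$ entering the reformulation of (a). I expect the bookkeeping to confirm that no regularity hypotheses beyond those already imposed on $L$ are required.
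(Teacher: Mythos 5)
Your proof is correct and follows essentially the same route as the paper's: the paper reduces the theorem to a lemma asserting that $G^L$-orthogonality of the two summands of a prescribed splitting is equivalent to the operator decomposition with $\tilde L_c = L_c$, and proves both directions by exactly the cross-term/adjoint computation you carry out at the operator level. The only point you treat more carefully than the paper is the reduction of condition (a) --- existence of the horizontal bundle and its coincidence with $H(c)$ --- to the orthogonality statement $L_c(H(c))\subseteq\Nor_c$, where your use of positivity to rule out a nonzero element of $\Tan_c\cap\Hor_c$ fills in a step the paper leaves implicit.
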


The theorem follows directly as a special case of the following lemma by setting $K(c)=\Tan(c)$. 

\begin{lem}
Let $c\in \Imm(S^1,\mathbb R^2)$ and let 
\begin{equation}\label{TImm_splitting_general}
T_c\Imm(S^1,\mathbb R^2)=H(c)\oplus K(c)
\end{equation}
be a given splitting with corresponding projections $P^H,P^K$. Then the following conditions on a metric $G^L$ are equivalent:
\begin{itemize}
\item[(a)] The subspace $H(c)$ is $G^L$-orthogonal to $K(c)$.
 \item[(b)] 
The operator $L_c$ has a decomposition  
\begin{equation*}
 L_c=  (P^{H})^*\circ \tilde L_c \circ P^{H}+ (P^{K})^*\circ \tilde L_c \circ P^{K},
\end{equation*}
where $\tilde L_c: T_c\Imm(S^1,\R^2)\to T_c\Imm(S^1,\R^2)$ is an invertible pseudo-differential operator and where ${}^*$ denotes the adjoint with respect to the reparametrization-invariant $L^2$-metric.
\end{itemize}
\end{lem}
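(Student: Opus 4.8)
The lemma is a clean linear-algebra statement (fiberwise, at a fixed curve $c$) dressed up with the $L^2$-pairing, so my plan is to work entirely in the fixed tangent space $V:=T_c\Imm(S^1,\R^2)$ equipped with the reparametrization-invariant $L^2$-inner product $\langle\cdot,\cdot\rangle_{L^2}$, and to translate the geometric condition (a) into an algebraic statement about the $L^2$-adjoint of $L_c$. The key identity is that $G^L_c(h,k)=\langle L_c h,k\rangle_{L^2}$, so $H(c)$ is $G^L$-orthogonal to $K(c)$ precisely when $\langle L_c h,k\rangle_{L^2}=0$ for all $h\in H(c)$ and $k\in K(c)$ (and, by the symmetry of $L_c$, also for all $h\in K(c)$, $k\in H(c)$). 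The whole proof is then a matter of showing that this mixed-vanishing condition is equivalent to the block-diagonal form in (b).

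Let me sketch both implications.

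For (b) $\Rightarrow$ (a), I would simply substitute the decomposition. Since $P^H$ annihilates $K(c)$ and $P^K$ annihilates $H(c)$, for $h\in H(c)$ and $k\in K(c)$ I compute
\begin{equation*}
G^L_c(h,k)=\langle (P^H)^*\tilde L_c P^H h,k\rangle_{L^2}+\langle (P^K)^*\tilde L_c P^K h,k\rangle_{L^2}
=\langle \tilde L_c P^H h,P^H k\rangle_{L^2}+\langle \tilde L_c P^K h,P^K k\rangle_{L^2}.
\end{equation*}
Now $P^H h=h$, $P^K h=0$, $P^H k=0$, $P^K k=k$, so both terms vanish; this gives (a) immediately.

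For (a) $\Rightarrow$ (b), which is the substantive direction, I would start from the two projections $P^H,P^K$ with $P^H+P^K=\on{id}$ and construct $\tilde L_c$ explicitly. The natural candidate is to define $\tilde L_c:=P^H L_c P^H+P^K L_c P^K$ up to taking appropriate adjoints, but the correct bookkeeping requires care because the $P$'s are \emph{not} $L^2$-orthogonal projections, so $(P^H)^*\neq P^H$ in general. The clean way is to decompose $L_c$ using the resolution of identity $\on{id}=P^H+P^K$ on both sides:
\begin{equation*}
L_c=(P^H+P^K)^*L_c(P^H+P^K)=(P^H)^*L_c P^H+(P^H)^*L_c P^K+(P^K)^*L_c P^H+(P^K)^*L_c P^K,
\end{equation*}
using $(P^H)^*+(P^K)^*=\on{id}$. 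The two mixed terms $(P^H)^*L_cP^K$ and $(P^K)^*L_cP^H$ encode exactly the $G^L$-pairing between $H(c)$ and $K(c)$; condition (a), together with the $L^2$-symmetry of $L_c$, forces both of them to vanish as operators. I would then set $\tilde L_c:=L_c$ (or, more carefully, verify that $L_c=(P^H)^*L_cP^H+(P^K)^*L_cP^K$ already has the desired form with $\tilde L_c=L_c$), and check that the resulting $\tilde L_c$ is an invertible pseudo-differential operator, which follows since $L_c$ is.

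The main obstacle I anticipate is the vanishing of the mixed terms: showing that $\langle L_c h,k\rangle_{L^2}=0$ for all $h\in H(c),k\in K(c)$ upgrades from a pairing condition to the operator identity $(P^K)^*L_cP^H=0$. This step uses that $P^H$ has range $H(c)$ and $(P^K)^*$ has range the $L^2$-annihilator of $K(c)$, so $(P^K)^*L_cP^H$ is determined by the pairing $\langle L_c(P^H\cdot),(P^K)\cdot\rangle_{L^2}$, which is precisely the mixed $G^L$-pairing; a secondary subtlety, present throughout in this infinite-dimensional setting, is that one must verify that $\tilde L_c=L_c$ genuinely remains a pseudo-differential operator of the required type and that invertibility is preserved, but both are inherited directly from the standing assumptions on $L_c$ and the smoothness of the projections associated to the splitting \eqref{TImm_splitting_general}.
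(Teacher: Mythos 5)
Your proposal is correct and follows essentially the same route as the paper: both directions reduce to expanding via $\mathrm{id}=P^H+P^K$, observing that orthogonality kills exactly the two mixed terms, and taking $\tilde L_c=L_c$ (whose invertibility and pseudo-differential nature are part of the standing assumptions). Your version merely makes explicit, at the operator level, the step the paper compresses into ``the formula for the operator $L$ follows directly.''
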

\begin{proof}
Assume $(a)$ and let $h=P^{H}(h)+P^K(h):=h^{H}+h^K$.
We have
\begin{align*}
&G_c(h^{H}+h^K,k^{H}+k^K)\\&\qquad=G_c(h^{H},k^{H})+G_c(h^{H},k^K)+
G_c(h^K,k^{H})+G_c(h^K,k^K)\\&\qquad=G_c(h^{H},k^{H})+0+G_c(h^{K},k^{K})\;,
\end{align*}
where the last equality follows from the orthogonality of the splitting with respect to the metric $G$. Now the formula for the operator $L$ follows directly.

Conversely, assume $(b)$. To see the orthogonality we calculate
\begin{align*}
G_c(h^{H},k^K)&=\int_{S^1}\langle Lh^{H},k^K \rangle ds = 
\int_{S^1} \langle 0+ (P^{H})^*(\tilde L_c (h^{H})),k^{K}  \rangle ds\\&=
\int_{S^1} \langle (\tilde L_c (h^{H}), P^{H}(k^K)  \rangle  ds = 0\;.\qedhere
\end{align*}
\end{proof}

\section{Applications}
\subsection{The splitting into tangential and normal vector fields}
In this section, we want to recover the results from \cite{BH2015}. Letting $n$ denote the unit length normal vector field to the curve $c$, we define 
\begin{equation}
\Nor(c):=\left\{h=a.n: a\in C^{\infty}(S^1)\right\}\;.
\end{equation}
This yields a splitting
\begin{equation}\label{splitting_nor_tan}
T_c\Imm(S^1,\mathbb R^2)=\Tan(c)\oplus\Nor(c)\;
\end{equation}
with corresponding projections
\begin{align*}
&(P^{\operatorname{tan}})^*(h)=P^{\operatorname{tan}}(h)=\langle h,v\rangle v,\\
&(P^{\operatorname{nor}})^*(h)=P^{\operatorname{nor}}(h)=\langle h,n\rangle n\;.
\end{align*}
By Theorem~\ref{thm:decomposition} the splitting into horizontal and vertical bundles coincides with the above splitting \eqref{splitting_nor_tan} if and only if
$L$ can be written as
\begin{equation}
L_c=  (P^{\operatorname{tan}})^*\circ \tilde L_c \circ P^{\operatorname{tan}}+ (P^{\operatorname{tan}})^*\circ \tilde L_c \circ P^{\operatorname{tan}}.
\end{equation}
This is the content of the main theorem of \cite{BH2015}.

A particular class of metrics inducing this splitting are almost local metrics \cite{Michor2007,Bauer2012a}. Further examples of higher order metrics are given in \cite{BH2015}.

\subsection{The splitting into tangential and constant speed preserving vector fields}
In this section we consider a different splitting, which is motivated by investigations of Riemannian metrics on the space of arc length parametrized curves \cite{Preston2011,Preston2012,PS2013}. In the following lemma we characterize all tangent vectors that preserve constant speed parametrization.
\begin{lem}
Let $c \in \Imm(S^1,\mathbb R^2)$ be parametrized by constant speed. Then a tangent vector $h\in T_c\Imm$ preserves the parametrization of $c$ if and only if 
\begin{equation}\label{equ:volumepreserving}
\langle D_s^2 h,v\rangle + \kappa \langle D_s h,n\rangle=0
\end{equation}
Here  $v=\frac{c'}{|c'|}$ the unit length tangent vector field, $n=iv$ denotes the unit length normal vector field and
 $\kappa$ the curvature of the curve. 
\end{lem}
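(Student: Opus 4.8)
The plan is to read the phrase \emph{preserves the parametrization} as the statement that $h$ is tangent to the (formal) submanifold of constant speed immersions inside $\Imm(S^1,\mathbb R^2)$. Concretely, I would choose any smooth path $c_t$ with $c_0=c$ and $\partial_t c_t|_{t=0}=h$, and declare that $h$ preserves the constant speed parametrization precisely when the first variation of the speed $|c_t'|$ is spatially constant, so that $c_t$ stays constant speed to first order in $t$. Equivalently, the preservation condition reads $\partial_\theta\big(\partial_t|c_t'|\big)\big|_{t=0}=0$. Fixing this interpretation is the conceptual heart of the argument; everything afterwards is a direct computation.

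Next I would compute the first variation of the speed. Differentiating $|c_t'|^2=\langle\partial_\theta c_t,\partial_\theta c_t\rangle$ in $t$ and evaluating at $t=0$ gives $\partial_t|c_t'|=\langle\partial_\theta h,c'\rangle/|c'|$. Rewriting this in terms of the arc length derivative through the identities $\partial_\theta=|c'|\,D_s$ and $c'=|c'|\,v$ turns the right-hand side into $|c'|\,\langle D_s h,v\rangle$.

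Then I would impose the preservation condition and invoke the constant speed hypothesis. Since $c$ is parametrized by constant speed, $|c'|$ is independent of $\theta$, so it commutes with $\partial_\theta$ and the condition $\partial_\theta(\partial_t|c_t'|)=0$ collapses to $D_s\langle D_s h,v\rangle=0$. It is exactly here that the constant speed assumption is used, in order to discard the term involving $\partial_\theta|c'|$. I expect this to be the main (if modest) obstacle: without the constant speed normalization one would retain an extra term measuring the nonuniformity of the parametrization, and the clean identity in the statement would fail.

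Finally, expanding $D_s\langle D_s h,v\rangle$ by the Leibniz rule yields $\langle D_s^2 h,v\rangle+\langle D_s h,D_s v\rangle=0$, and substituting the Frenet relation $D_s v=\kappa n$ rewrites the second summand as $\kappa\langle D_s h,n\rangle$, producing the claimed equation $\langle D_s^2 h,v\rangle+\kappa\langle D_s h,n\rangle=0$. The converse is automatic because every step above is an equivalence: a tangent vector satisfying the displayed equation forces the first variation of the speed to be spatially constant, hence preserves the constant speed parametrization.
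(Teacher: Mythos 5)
Your proposal is correct and follows essentially the same route as the paper: the paper computes the infinitesimal variation of the volume form, $D_{c,h}(ds)=\langle v,D_sh\rangle\,ds$, and obtains the equation by setting $D_s\langle v,D_sh\rangle=0$, which is exactly your computation of the first variation of the speed followed by the requirement that it be spatially constant. You merely spell out the final Leibniz/Frenet step, which the paper leaves implicit.
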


\begin{proof}
This follows immediately from the infinitesimal action of $h$ on the volume form $ds$:
\begin{equation}
D_{c,h}(ds)= D_{c,h}(|c'|d\theta)=\frac{\langle c', h'\rangle}{|c'|}d\theta= \langle v, D_sh\rangle ds\;.
\end{equation}
Equation~\eqref{equ:volumepreserving} is obtained by setting $D_s\langle v,D_sh\rangle=0$.
\end{proof}
We now describe a decomposition of  $T\Imm(S^1,\R^2)$ in a subspace that preserves constant speed and a complement with values in the tangential bundle.
\begin{lem}
For each regular curve $c$ the  tangent bundle $T_c\Imm(S^1,\R^2)$ can be decomposed as
\begin{equation}\label{TImm_splitting}
T_c\Imm(S^1,\R^2) =   \operatorname{Tan}(c)\oplus \operatorname{Arc}^0(c)\;,
\end{equation}
where 
\begin{align}\label{TImm_splitting2}
 \operatorname{Tan}(c)&:=\left\{h=f.v: a\in C^{\infty}(S^1) \right\}\;,\\
 \operatorname{Arc}^0(c)&:= \left\{h=a.n+b.v: D_s^2 b = D_s (a\kappa) \text{ and } b(0)=0\right\}\;.
\end{align}
The corresponding projections onto these subspaces are given by
\begin{align}\label{TImm_splitting3}
P^{\operatorname{Arc}^0}(k)&:=k^{\operatorname{arc}}=\langle k,n\rangle n + bv\;,\\
P^{\operatorname{Tan}}(k)&:=k^{\operatorname{tan}}= \langle k,v\rangle v- bv\;,
\end{align}
where $b$ solves
\begin{equation}
D_s^2 b = D_s (\langle k,n \rangle \kappa) \text{ with } b(0)=0\;.
\end{equation}
\end{lem}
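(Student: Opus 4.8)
The plan is to reduce the vector-valued splitting to a scalar ODE on $S^1$ by working in the moving frame $\{v,n\}$, and then to show that this ODE is uniquely solvable once the circle's periodicity constraint is respected; that solvability is the whole content of the lemma.

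First I would write an arbitrary tangent vector in the frame as $h=a.n+b.v$ with $a,b\in C^\infty(S^1)$, which is possible since $\{v,n\}$ spans $\R^2$ at each point. Using the Frenet relations $D_s v=\kappa n$ and $D_s n=-\kappa v$, a direct computation gives $\langle D_s^2 h,v\rangle+\kappa\langle D_s h,n\rangle=D_s^2 b-D_s(a\kappa)$, so the constant-speed condition of the previous lemma is equivalent to $D_s^2 b=D_s(a\kappa)$. This confirms that $\operatorname{Arc}^0(c)$ consists exactly of the constant-speed-preserving fields subject to the normalization $b(0)=0$, and it explains the defining relation in \eqref{TImm_splitting2}.

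The heart of the argument, and the main obstacle, is the solvability of $D_s^2 b=D_s(a\kappa)$ on the closed curve with $b(0)=0$. I would integrate twice. Since $D_s(D_s b-a\kappa)=0$ and any function on $S^1$ with vanishing $D_s$-derivative is constant, the first integration yields $D_s b=a\kappa+C$ for some constant $C$. The delicate point is the compatibility condition: for $b$ to descend to a well-defined periodic function one needs $\oint_{S^1}D_s b\,ds=0$, i.e. $\oint_{S^1}(a\kappa+C)\,ds=0$, which determines $C=-\tfrac{1}{L}\oint_{S^1}a\kappa\,ds$ uniquely, where $L=\oint_{S^1}ds$. A second integration then gives $b$ up to an additive constant, pinned down by $b(0)=0$; smoothness of $b$ is automatic because $a\kappa$ is smooth. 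This is exactly where the closedness of the curve enters, and it is the step I expect to require care.

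With solvability established I would assemble the splitting. For existence, given arbitrary $k$ set $a=\langle k,n\rangle$ and let $b$ be the unique solution above; then $P^{\operatorname{Arc}^0}(k)=\langle k,n\rangle n+b.v$ lies in $\operatorname{Arc}^0(c)$ by construction, $P^{\operatorname{Tan}}(k)=(\langle k,v\rangle-b).v$ lies in $\operatorname{Tan}(c)$, and their sum recovers $k=\langle k,v\rangle v+\langle k,n\rangle n$. For directness I would check $\operatorname{Tan}(c)\cap\operatorname{Arc}^0(c)=\{0\}$: an element of the intersection has vanishing normal part, so $a=0$ and hence $D_s^2 b=0$; the periodicity constraint forces the integration constant to vanish, so $b$ is constant, and $b(0)=0$ gives $b=0$. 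Thus the sum is direct and, by uniqueness of the direct-sum decomposition, the displayed formulas are the associated projections. The one subtlety worth flagging is precisely the role of $b(0)=0$: without it the field $v$ itself (namely $a=0$, $b\equiv 1$) would lie in both summands, so this normalization is exactly what makes the decomposition direct.
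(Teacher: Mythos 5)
Your proof is correct, and it is in fact more complete than the paper's own argument on the one point that genuinely requires care. The paper proceeds by verifying that the displayed formulas land in the right subspaces (via the same Frenet computation you perform), asserting that the projection property for $P^{\operatorname{Arc}^0}$ is ``clear,'' deducing idempotency of $P^{\operatorname{Tan}}=\operatorname{Id}-P^{\operatorname{Arc}^0}$, and concluding the splitting from continuity and closedness of the kernels; it does not address why the equation $D_s^2 b = D_s(a\kappa)$ with $b(0)=0$ has a unique \emph{periodic} solution, saying only that $b(0)=0$ is chosen ``to enforce uniqueness.'' You instead put the solvability front and center: the first integration gives $D_s b = a\kappa + C$, periodicity of $b$ on $S^1$ forces $\oint_{S^1} D_s b\, ds = 0$ and hence pins down $C = -\tfrac{1}{L}\oint_{S^1} a\kappa\, ds$, and the second integration plus $b(0)=0$ gives uniqueness. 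This compatibility condition is exactly what makes the projections well defined, and your explicit check that $\operatorname{Tan}(c)\cap\operatorname{Arc}^0(c)=\{0\}$ (with the observation that without $b(0)=0$ the field $v$ would lie in both summands) replaces the paper's idempotency argument with a direct and arguably more transparent verification of directness. Both routes are valid; yours buys a rigorous existence--uniqueness statement for $b$ that the paper leaves implicit, while the paper's idempotency route is slightly shorter once solvability is granted.
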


\begin{proof}
We start by showing that the projections take values in the correct spaces. Let $a=\langle k,n\rangle$. We calculate:
\begin{align*}
\langle D_s P^{\operatorname{Arc^0}}(k), n \rangle&= \langle D_s (an+bv), n \rangle= \langle (D_s a+b \kappa) n+   (D_sb-a\kappa)v, n \rangle
\\&= D_s a+b \kappa\\
\langle D_s^2 P^{\operatorname{Arc^0}}(k), v \rangle&=  \langle D_s^2 (an+bv), v \rangle=\langle D_s ((D_s a+b \kappa) n+   (D_sb-a\kappa)v), v \rangle\\
&=-\kappa (D_s a+b \kappa) +   D^2_s b- D_s(a\kappa) 
\end{align*}
Thus we have
\begin{align*}
\langle D_s^2 h,v \rangle + \kappa \langle D_s h,n\rangle &= -\kappa (D_s a+b \kappa) +   D^2_s b- D_s(a\kappa) +\kappa(D_s a+b \kappa)\\
&=D_s^2 b -D_s (a\kappa)\;.
\end{align*}
This shows that $P^{\operatorname{Arc^0}}(k)$ preserves constant speed parametrization. We choose $b(0)=0$ to enforce uniqueness of the solutions. 
The mapping $P^{\operatorname{Tan}}$ takes values in $\Tan$ by definition. 
The projection property for $P^{\operatorname{Arc^0}}$  is clear. For $P^{\operatorname{Tan}}$ we have
\begin{align*}
&(P^{\operatorname{Tan}})^2(k)=(\operatorname{Id}-P^{\operatorname{Arc}^0})^2(k)=(\operatorname{Id}-P^{\operatorname{Arc}^0})(k)=P^{\operatorname{Tan}}(k)\;.
\end{align*}
Note, that this proves also that  $P^{\operatorname{Tan}}$ (resp. $P^{\operatorname{Arc}^0}$) are surjective 
mappings onto $\operatorname{Tan}$ (resp. $\operatorname{Arc}^0$). 
As $P^{\operatorname{Tan}}$ and $P^{\operatorname{Arc}^0}$ are continuous mappings, their kernels $\operatorname{Arc}$ and $\operatorname{Tan}$ are closed subspaces. This shows that $T\Imm(S^1,\R^2)$ splits as in \eqref{TImm_splitting}.
\end{proof}

Using Theorem \ref{thm:decomposition} we can now construct metrics on $\Imm(S^1,\R^2)$ such that the horizontal bundle coincides with $\operatorname{Arc}^0(c)$. The general formula for these metrics is given by:
\begin{align*}
G_c(h,k) &= \int_{S^1} \langle L_c (h^{\operatorname{tan}}),k^{\operatorname{tan}} \rangle ds+  \int_{S^1} \langle L_c (h^{\operatorname{arc}}),k^{\operatorname{arc}} \rangle ds
\end{align*}
The simplest example, which corresponds to the identity operator $L$, is 
\begin{align*}
G_c(h_1,h_2) &= \int_{S^1} \langle h_1^{\operatorname{tan}},h_2^{\operatorname{tan}} \rangle ds+  \int_{S^1} \langle h_1^{\operatorname{arc}},h_2^{\operatorname{arc}} \rangle ds
\\&= \int_{S^1} \langle h_1-b_1.v,h_2-b_2.v \rangle ds +  \int_{S^1} b_1 b_2+a_1a_2 ds\\
&= \int_{S^1}   
2 a_1 a_2 +\tilde b_1 \tilde b_2 - b_1\tilde b_2 - \tilde b_1 b_2+ b_1 b_2\; ds
\end{align*}
with 
\begin{align*}
a_i = \langle h_i, n\rangle,\quad \tilde b_i = \langle h_i, v\rangle,  \quad D_s^2 b_i = D_s (a_i \kappa), \quad b_i(0)=0\;.
\end{align*}
For $h_1=h_2=h$ this reads as
\begin{align*}
G_c(h,h)= \int_{S^1}   2 a^2  +(\tilde b-b)^2\; ds\;.
\end{align*}
On the space of constant-speed parametrized immersions this induces the $L^2$-metric studied in \cite{Preston2011,Preston2012}.


\end{document}